\pdfoutput=1 

\documentclass[11pt]{amsart}

\usepackage{amssymb,amsmath,amsthm,enumitem,colonequals,mlmodern,tikz-cd,microtype}
\usepackage[cal=euler,bfcal,bb=px,bfbb]{mathalpha}

\usepackage[top=3.75cm, bottom=3cm, left=3.5cm, right=3.5cm]{geometry}

\interfootnotelinepenalty=10000 

\usepackage{xcolor}
\colorlet{darkblue}{blue!55!black}
\colorlet{darkcyan}{cyan!50!black}
\colorlet{darkgreen}{green!60!black}

\PassOptionsToPackage{hyphens}{url}
\usepackage{hyperref}
\hypersetup{
    colorlinks=true,
    linkcolor=darkblue,
    urlcolor=darkcyan,
    citecolor=darkgreen,
}

\def\eqref#1{\textcolor{darkblue}{(\ref{#1})}}

\usepackage[nameinlink]{cleveref} 
\Crefformat{section}{#2\S#1#3}
\Crefmultiformat{section}{#2\S\S#1#3}{ and~#2#1#3}{, #2#1#3}{, and~#2#1#3}

\usepackage[pagewise]{lineno}
\overfullrule=100pt 

\let\oldequation\equation
\let\oldendequation\endequation

\expandafter\let\expandafter\oldequationstar\csname equation*\endcsname
\expandafter\let\expandafter\oldendequationstar\csname endequation*\endcsname
\renewenvironment{equation*}{\linenomathNonumbers\oldequationstar}{\oldendequationstar\endlinenomath}
\let\oldalign\align
\let\oldendalign\endalign

\expandafter\let\expandafter\oldalignstar\csname align*\endcsname
\expandafter\let\expandafter\oldendalignstar\csname endalign*\endcsname
\renewenvironment{align*}{\linenomathNonumbers\oldalignstar}{\oldendalignstar\endlinenomath}

\theoremstyle{plain}
\newtheorem{theorem}{Theorem}[section]
\newtheorem{lemma}[theorem]{Lemma}
\newtheorem{corollary}[theorem]{Corollary}
\newtheorem{proposition}[theorem]{Proposition}
\newtheorem*{thm}{Theorem}

\theoremstyle{definition}
\newtheorem{definition}[theorem]{Definition}
\newtheorem{example}[theorem]{Example}
\newtheorem{remark}[theorem]{Remark}

\newtheorem*{ack}{Acknowledgment}

\AddToHook{env/conjecture/begin}{\crefalias{theorem}{conjecture}}
\AddToHook{env/lemma/begin}{\crefalias{theorem}{lemma}}
\AddToHook{env/corollary/begin}{\crefalias{theorem}{corollary}}
\AddToHook{env/proposition/begin}{\crefalias{theorem}{proposition}}
\AddToHook{env/definition/begin}{\crefalias{theorem}{definition}}
\AddToHook{env/remark/begin}{\crefalias{theorem}{remark}}
\AddToHook{env/setup/begin}{\crefalias{theorem}{setup}}
\AddToHook{env/example/begin}{\crefalias{theorem}{example}}
\AddToHook{env/conjecture/begin}{\crefalias{theorem}{Conjecture}}

\setcounter{tocdepth}{2}
\setcounter{secnumdepth}{2}
\numberwithin{equation}{section}
\numberwithin{theorem}{section}


\title[Singly compactly generated $t$-structures for schemes]{Nonexistence of singly compactly generated $t$-structures for schemes}

\author[A.~Bhaduri]{Anirban Bhaduri}
\address{A.~Bhaduri,
Department of Mathematics,
University of South Carolina, 
Columbia, SC, U.S.A.}
\email{abhaduri@email.sc.edu}

\author[T.~De Deyn]{Timothy De Deyn}
\address{T.~De Deyn,
School of Mathematics and Statistics,
University of Glasgow, 
Glasgow G12 8QQ,
United Kingdom}
\email{timothy.dedeyn@glasgow.ac.uk}

\author[M.~Hrbek]{Michal Hrbek}
\address{M.~Hrbek,
Institute of Mathematics of the Czech Academy of Sciences,
Prague, Czechia}
\email{hrbek@math.cas.cz}

\author[P.~Lank]{Pat Lank}
\address{P.~Lank,
Dipartimento di Matematica “F. Enriques”, Universit\`{a} degli Studi di Milano, Milano, Italy}
\email{plankmathematics@gmail.com}

\author[K.~Manali-Rahul]{Kabeer Manali-Rahul}
\address{K.~Manali-Rahul,
Max Planck Institute for Mathematics,
Bonn, Germany}
\email{kabeermr.maths@gmail.com}

\date{\today}

\keywords{$t$-structures, single generator, algebraic stacks, derived categories}

\subjclass[2020]{14A30 (primary), 14D23, 14F08, 18E10} 





\begin{document}
    
\begin{abstract}
    We show the first instances of schemes whose standard aisles on their derived category of quasi-coherent sheaves are not singly compactly generated.
\end{abstract}

\maketitle


\section{Introduction}
\label{sec:intro}

To date, the territories of derived categories associated with schemes remain largely unexplored. Researchers in these lands now possess a variety of tools at their disposal. Among these is a notion introduced by \cite{Beilinson/Berstein/Deligne/Gabber:2018} called a \textit{$t$-structure}. Roughly speaking, a $t$-structure on a triangulated category $\mathcal{T}$ is a pair of subcategories $(\mathcal{T}^{\leq 0}, \mathcal{T}^{\geq 0})$ satisfying axioms that describe objects of $\mathcal{T}$ in terms of those in $\mathcal{T}^{\leq 0}$ and $\mathcal{T}^{\geq 0}$. In fact, the $t$-structure is completely determined by its \textit{aisle} $\mathcal{T}^{\leq 0}$ \cite{Keller/Vossieck:1988}. These tools provide a coarser, and hence more digestible, topography of these lands.

It remains unknown whether the standard $t$-structure is singly compactly generated, i.e.\ generated by a single compact generator, for any Noetherian scheme. Thus far, by \cite[Theorem 3.2]{Neeman:2022}, all that is known is that the standard aisle $D^{\leq 0}_{\operatorname{qc}}(X)$ is always `equivalent' (in the sense of \cite[Definition 0.18]{Neeman:2025}) to a singly compactly generated aisle. To some, there was an expectation that the standard aisle was itself singly compactly generated. 

However, it turns out, this is \textbf{horridly wrong}:

\begin{thm}
    [see \Cref{thm:coarse_moduli}]
    This is false for $\mathbb{P}^1_k$ over a field $k$. In fact, it fails more generally for proper tame Deligne--Mumford stacks $\mathcal{X}$ of positive Krull dimension over $k$.
\end{thm}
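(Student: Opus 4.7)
The plan is to argue by contradiction. Suppose there exists a single compact object $G$ in $D(\operatorname{Qcoh}(\mathcal{X}))$ whose generated aisle equals the standard aisle $D^{\leq 0}_{\operatorname{qc}}(\mathcal{X})$. Since $\mathcal{X}$ is a Noetherian tame Deligne--Mumford stack, the compact objects of $D(\operatorname{Qcoh}(\mathcal{X}))$ are precisely the perfect complexes; in particular $G$ has coherent cohomology sheaves and lies in $D^{\leq 0}_{\operatorname{qc}}(\mathcal{X})$.

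The first step is to distill a generator of the heart out of $G$. The coaisle of the aisle generated by $G$ is the right orthogonal $\{Y \in D_{\operatorname{qc}}(\mathcal{X}) : \operatorname{Hom}(G[n], Y) = 0 \text{ for all } n \geq 0\}$, which by assumption must coincide with $D^{\geq 1}_{\operatorname{qc}}(\mathcal{X})$. Testing this against a nonzero quasi-coherent sheaf $\mathcal{F}$ placed in degree zero, the groups $\operatorname{Hom}(G[n], \mathcal{F}) = \operatorname{Hom}(G, \mathcal{F}[-n])$ for $n \geq 1$ vanish by $t$-structure orthogonality (since $G \in D^{\leq 0}$ while $\mathcal{F}[-n] \in D^{\geq n}$). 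We are therefore forced to have
\[
\operatorname{Hom}(H^0(G), \mathcal{F}) = \operatorname{Hom}(G, \mathcal{F}) \neq 0,
\]
which says that $H^0(G)$ is a \emph{coherent} generator of $\operatorname{Qcoh}(\mathcal{X})$.

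The second step is to contradict this by showing that $\operatorname{Qcoh}(\mathcal{X})$ admits no coherent generator when $\mathcal{X}$ is proper of positive Krull dimension. For the archetypal case $\mathcal{X} = \mathbb{P}^1_k$, this is essentially immediate from Grothendieck's splitting theorem: any candidate coherent generator $\mathcal{G}$ decomposes as a torsion part plus a finite direct sum $\bigoplus_i \mathcal{O}(m_i)$, and then $\operatorname{Hom}(\mathcal{G}, \mathcal{O}(-n)) = 0$ for any $n > -\min_i m_i$ even though $\mathcal{O}(-n) \neq 0$, contradicting the generator property. For a general proper tame DM stack of positive dimension, I would extract a line bundle $\mathcal{L}$ from (a projective cover of) the coarse moduli space $X$ via Chow's lemma, pull it back along $\pi \colon \mathcal{X} \to X$, and use tameness together with the projection formula to reduce the vanishing of $\operatorname{Hom}(\mathcal{G}, \pi^*\mathcal{L}^{\otimes -n})$ for $n \gg 0$ to a Serre-vanishing statement on the projective cover.

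The main obstacle is precisely this second step in the general DM-stack setting: producing a sufficiently rich family of coherent sheaves that elude any fixed coherent $\mathcal{G}$ requires combining Chow's lemma, careful use of the coarse moduli adjunction, and the tameness hypothesis. The compact-equals-perfect identification on tame DM stacks is standard, so the geometric construction of the escaping family is where the real work lies.
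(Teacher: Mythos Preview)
Your first step---extracting $H^0(G)$ as a coherent weak generator of the heart---is correct and matches the paper's \Cref{lem:compact_generator_to_weak} exactly. Your argument for $\mathbb{P}^1_k$ via Grothendieck splitting is also fine, and is close in spirit to the paper's appendix, which uses Harder--Narasimhan filtrations and slope to produce escaping sheaves on any smooth projective complex curve.

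The gap is in the general DM-stack step. The coarse moduli space $X$ of a proper tame Deligne--Mumford stack is only a proper \emph{algebraic space}, not in general a projective scheme, so there is no ample line bundle on $X$ to pull back along $\pi$. Chow's lemma produces a projective $X'$ with a proper birational $f\colon X'\to X$, but the line bundle lives on $X'$, and there is no evident way to transport it to $\mathcal{X}$ while retaining the Serre-vanishing control you need. Even granting projectivity of $X$, the projection-formula reduction you sketch, namely $\operatorname{Hom}_{\mathcal{X}}(\mathcal{G},\pi^*\mathcal{L}^{-n}) \cong H^0(X,\pi_*(\mathcal{G}^\vee)\otimes\mathcal{L}^{-n})$, presupposes $\mathcal{G}$ locally free, which a coherent weak generator need not be. So the ``escaping family'' is not actually constructed.

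The paper avoids all of this by exploiting properness in a different and more robust way: properness over $k$ makes $\operatorname{coh}(\mathcal{X})$ $\operatorname{Hom}$-finite, and a short abstract argument (\Cref{lem:weak_geneator_implies_Artinian_category}) shows that any $\operatorname{Hom}$-finite Noetherian abelian category admitting a weak generator is already Artinian. Hence $\operatorname{coh}(\mathcal{X})$ is a length category; pushing this down along the exact $\pi_*$ (using tameness: $\pi_*$ exact and the unit $1\to\pi_*\pi^*$ an isomorphism) forces $\operatorname{coh}(X)$ to be Artinian as well, whence $X$ is zero-dimensional---contradiction. No line bundles, no Serre vanishing, no Chow's lemma are needed.
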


To find counterexamples is not an obvious, nor straightforward, task. Particularly, for the case of $\mathbb{P}^1_k$ where $k=\mathbb{C}$, we checked the failure by using stability of vector bundles (see \Cref{sec:appendix_semistable}). However, the main proof in the text does not require stability, and holds far more generally. Ultimately, we show that $D^{\leq 0}_{\operatorname{qc}}(\mathcal{X})$ being singly compactly generated implies every object of $\operatorname{coh}(\mathcal{X})$ is Artinian (see \Cref{cor:abelian_singly_compactly_generated_implies_Artinian}). In fact, singly compactly generated is equivalent to $\operatorname{coh}(\mathcal{X})$ admitting a weak generator (see \Cref{lem:compact_generator_to_weak}). And for the case of proper algebraic spaces over a field, every coherent sheaf being Artinian occurs precisely when the algebraic space is an affine Artinian scheme (see \Cref{prop:proper_case}). Lastly, we show Noetherian schemes (more generally algebraic spaces) which are proper and of positive relative dimension over a base cannot have singly compactly generated standard aisles (see \Cref{prop:positive_relative_dimension}). For example, this includes proper schemes over the integers with positive dimensional fibers, which gives `arithmetic' cases.

\begin{ack}
    De Deyn was supported by ERC Consolidator Grant 101001227 (MMiMMa). Hrbek was supported by the GA\v{C}R project 23-05148S and the Czech Academy of Sciences (RVO 67985840). Lank was supported under the ERC Advanced Grant 101095900-TriCatApp. Manali-Rahul is grateful to Max Planck Institute for Mathematics in Bonn for its hospitality and financial support. The authors thank Leovigildo Alonso Tarr\'{i}o, Uttaran Dutta, Ana Jerem\'{i}as L\'{o}pez, Amnon Neeman, and Fei Peng for discussions and comments.
\end{ack}

\section{Preliminaries}
\label{sec:prelim}

\subsection{Abelian categories}
\label{sec:prelim_abelian}

Let $\mathcal{A}$ be an essentially small abelian category. Recall that an object $E\in \mathcal{A}$ is called \textbf{Artinian} (resp.\ \textbf{Noetherian}) if any descending (resp.\ ascending) chain of subobjects of $E$ becomes stationary. We say $\mathcal{A}$ is \textbf{Artinian} (resp.\ \textbf{Noetherian}) if every object is such. In the case $\mathcal{A}$ is both Artinian and Noetherian, we say it is a \textbf{length category}. When $\mathcal{A}$ is additionally Grothendieck, it is called \textbf{locally Noetherian} when every object of $\mathcal{A}$ is a directed union of its Noetherian subobjects, which we denote by $\operatorname{noeth}(\mathcal{A})$.
Furthermore, recall an object $G\in \mathcal{A}$ is called a \textbf{weak generator} if $\operatorname{Hom}(G,-)\colon \mathcal{A} \to \operatorname{Ab}$ is faithful on objects (i.e.\ $\operatorname{Hom}(G,E)\cong 0$ implies $E\cong 0$). Lastly, if $R$ is a commutative Noetherian ring, we say $\mathcal{A}$ is \textbf{$\operatorname{Hom}$-finite} over $R$ if it is an $R$-linear category and $\operatorname{Hom}(A,B)$ is a finite length $R$-module for all $A,B\in \mathcal{A}$. This notion of $\operatorname{Hom}$-finiteness may differ from others in the literature; e.g.\ finitely generated module as opposed to finite length. 
We sometimes abusively say `$\operatorname{Hom}$-finite' leaving the commutative ring $R$ implicit.

\subsection{\texorpdfstring{$t$}{t}-structures}
\label{sec:prelim_t-structures}

Let $\mathcal{T}$ be a compactly generated triangulated category. We discuss $t$-structures on triangulated categories. See \cite{Keller/Vossieck:1988,Beilinson/Berstein/Deligne/Gabber:2018} for details. Denote its subcategory of compact objects by $\mathcal{T}^c$. A pair of strictly full subcategories $(\mathcal{T}^{\leq 0}, \mathcal{T}^{\geq 0})$ of $\mathcal{T}$ is a \textbf{$t$-structure} if  $\operatorname{Hom}(A,B) = 0$ for all $A \in \mathcal{T}^{\leq 0}$ and $B \in \mathcal{T}^{\geq 0}[-1]$; $\mathcal{T}^{\leq 0}[1] \subseteq \mathcal{T}^{\leq 0}$ and $\mathcal{T}^{\geq 0}[-1] \subseteq \mathcal{T}^{\geq 0}$; and for every $E \in \mathcal{T}$, there is a distinguished triangle
\begin{displaymath}
    \tau^{\leq 0} E \to E \to \tau^{\geq 1} E \to (\tau^{\leq 0} E)[1]
\end{displaymath}
with $\tau^{\leq 0} E \in \mathcal{T}^{\leq 0}$ and $\tau^{\geq 1} E \in \mathcal{T}^{\geq 0}[-1]$. Also, we say a strictly full subcategory $\mathcal{U} \subseteq \mathcal{T}$ is an \textbf{aisle} if the inclusion $\mathcal{U} \to \mathcal{T}$ admits a right adjoint while $\mathcal{U}$ is closed under positive shifts and extensions. In fact, for any $t$-structure $(\mathcal{T}^{\leq 0}, \mathcal{T}^{\geq 0})$, $\mathcal{U}$ is an aisle. Particularly, any aisle $\mathcal{U}$ determines a $t$-structure $(\mathcal{U}, \mathcal{U}^\perp[1])$ where
\begin{displaymath}
    \mathcal{U}^\perp := \{ T \in \mathcal{T} \mid \forall U \in \mathcal{U}, \operatorname{Hom}(U,T) = 0  \}.
\end{displaymath}
As an example, for an abelian category $\mathcal{A}$, we have the `standard' $t$-structure whose aisle is given by $D^{\leq 0}(\mathcal{A})$ given by objects whose $i$-th cohomology is zero for $i>0$.

Given a set $\mathcal{S}\subseteq \mathcal{T}$, $\overline{\langle \mathcal{S} \rangle}^{(-\infty, 0]}$ is defined to be the smallest cocomplete aisle containing $\mathcal{S}$. By \cite[Theorem 2.3]{Neeman:2021}, such aisles always exist. An aisle $\mathcal{U}$ on $\mathcal{T}$ is \textbf{compactly generated} when there exists a collection of compact objects $\mathcal{P} \subseteq \mathcal{T}^c$ satisfying $\overline{\langle \mathcal{P} \rangle}^{(-\infty, 0]} = \mathcal{U}$. Hence, we say a $t$-structure is \textbf{compactly generated} if its aisle is such. 

\subsection{Algebraic stacks}
\label{sec:prelim_stacks}

Our conventions for algebraic stacks is \cite{stacks-project}. However, we follow \cite[\S1]{Hall/Rydh:2017} for the derived pullback/pushforward adjunction (loc.\ cit.\ follows \cite{Olsson:2007,Laszlo/Olsson:2008a,Laszlo/Olsson:2008b}). Also, we omit the `qc' in the notation of the derived functors compared to \cite{Hall/Rydh:2017}. Symbols $X$, $Y$, etc.\ refer to schemes/algebraic spaces, whereas $\mathcal X$, $\mathcal Y$, etc.\ refer to algebraic stacks. Let $\mathcal{X}$ be a Noetherian algebraic stack. 

Denote by $\operatorname{Mod}(\mathcal{X})$ the Grothendieck abelian category of sheaves of $\mathcal{O}_\mathcal{X}$-modules on the lisse-\'{e}tale site of $\mathcal{X}$ and $\operatorname{Qcoh}(\mathcal{X})$ (resp.\ $\operatorname{coh}(\mathcal{X})$) for  the strictly full subcategory of $\operatorname{Mod}(\mathcal{X})$ consisting of quasi-coherent (resp.\ coherent) sheaves. Define $D(\mathcal{X}):=D(\operatorname{Mod}(\mathcal{X}))$ for the (unbounded) derived category of $\operatorname{Mod}(\mathcal{X})$. Set $D_{\operatorname{qc}}(\mathcal{X})$ (resp.\ $D^b_{\operatorname{coh}}(\mathcal{X})$) for the full subcategory of $D(\mathcal{X})$ consisting of complexes with quasi-coherent cohomology sheaves (resp.\ which are bounded and have coherent cohomology sheaves). Moreover, $\operatorname{Perf}(\mathcal{X})$ is the full subcategory of perfect complexes in $D_{\operatorname{qc}}(\mathcal{X})$ which can be defined for any ringed site \cite[\href{https://stacks.math.columbia.edu/tag/08G4}{Tag 08G4}]{stacks-project}, and so, in particular, for $\mathcal{X}$ by looking at its lisse-\'{e}tale site. As a warning, the compact objects of $D_{\operatorname{qc}}(\mathcal{X})$ are perfect complexes \cite[Lemma 4.4]{Hall/Rydh:2017}, although the converse need not be true. 

We say $\mathcal{X}$ is \textbf{affine-pointed} if every morphism $\operatorname{Spec}(k) \to \mathcal{X}$ from a field $k$ is affine; this is automatic when $\mathcal X$ has quasi-affine or quasi-finite diagonal.
Furthermore, $\mathcal{X}$ is said to satisfy the \textbf{Thomason condition} if there is a cardinal $\beta$ such that $D_{\operatorname{qc}}(\mathcal{X})$ is compactly generated by a collection of cardinalilty at most $\beta$ and for each closed subset $Z$ of $|\mathcal{X}|$ with quasi-compact complement, there is a perfect complex $P$ with support $Z$. 

\section{Results}
\label{sec:results}

\subsection{Locally Noetherian Abelian categories}
\label{sec:results_abelian}

\begin{definition}
    \label{def:approx}
    Let $\mathcal{A}$ be a locally Noetherian Grothendieck abelian category. We say $\mathcal{A}$ satisfies \textbf{approximation by compacts} for every object $E\in D^-(\mathcal{A})$ with Noetherian cohomology and for every $m\in \mathbb{Z}$ there exists a compact $P \in D(\mathcal{A})$ and a morphism $P \to E$ inducing isomorphisms $H^i(C) \to H^i(E)$ for $i > m$ and a surjection for $i = m$.
\end{definition}

\begin{example}
    Approximation by compacts of $\mathcal{A} = \operatorname{Qcoh}(-)$ for a quasi-compact quasi-separated scheme was initially due to Lipman--Neeman for schemes \cite{Lipman/Neeman:2007}, which was later extended to algebraic spaces \cite[\href{https://stacks.math.columbia.edu/tag/08HH}{Tag 08HH}]{stacks-project}. There are also algebraic stacks satisfying this, but a few extra conditions and facts are needed. If $\mathcal{X}$ is a quasi-compact quasi-separated algebraic stack with affine diagonal, then \cite[Theorem 1.2 and proof of Lemma 2.5]{Hall/Neeman/Rydh:2019} tells us the natural functor $D(\operatorname{Qcoh}(\mathcal{X})) \to D_{\operatorname{qc}}(\mathcal{X})$ is an equivalence which respects the standard $t$-structures. 
    Additionally, in such cases, if $\mathcal{X}$ has quasi-finite separated diagonal or is Deligne--Mumford of characteristic zero, then \cite[Theorem A]{Hall/Lamarche/Lank/Peng:2025} tells us $D_{\operatorname{qc}}(\mathcal{X})$, and hence $D(\operatorname{Qcoh}(\mathcal{X}))$, satisfies approximation by compacts.
    Details regarding $\operatorname{Qcoh}(-)$ being a locally Noetherian Grothendieck abelian category are given in the proofs of \Cref{prop:proper_case} and \Cref{thm:coarse_moduli}.
\end{example}

\begin{lemma}
    \label{lem:coh_noeth}
    Let $\mathcal{A}$ be a locally Noetherian Grothendieck abelian category.
    Then any compact object in $D(\mathcal{A})$ has bounded and Noetherian cohomology.
\end{lemma}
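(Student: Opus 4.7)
The plan is to force contradictions from compactness of $P$ applied to two kinds of constructions: a filtered homotopy colimit of Noetherian subobjects for the Noetherianness part, and a coproduct of pointwise K-injective slices for the boundedness part. In both cases, exactness of filtered colimits in the Grothendieck category $\mathcal{A}$ is the key abelian-categorical input.

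For Noetherianness of a given $H^n(P)$, the plan is to write $H^n(P)=\operatorname{colim}_\alpha N_\alpha$ as the filtered union of its Noetherian subobjects, which is granted by the locally Noetherian hypothesis. Exactness of filtered colimits identifies $H^n(P)[-n]$ with the hocolim of the $N_\alpha[-n]$ in $D(\mathcal{A})$, so compactness yields $\operatorname{Hom}(P,H^n(P)[-n])=\operatorname{colim}_\alpha \operatorname{Hom}(P,N_\alpha[-n])$. The canonical truncation map $P \to H^n(P)[-n]$, being the identity on $H^n$, therefore factors as $P \to N_\alpha[-n] \hookrightarrow H^n(P)[-n]$ for some $\alpha$. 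Taking $H^n$ exhibits $N_\alpha \hookrightarrow H^n(P)$ as a split monomorphism, hence an isomorphism, so $H^n(P)=N_\alpha$ is Noetherian.

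For boundedness, I would suppose for contradiction $H^{n_k}(P)\neq 0$ for infinitely many pairwise distinct integers $n_k$, choose injective envelopes $H^{n_k}(P)\hookrightarrow E_k$ in $\mathcal{A}$, and set $Y_k:=E_k[-n_k]$. Each $Y_k$ is K-injective and concentrated in the single degree $n_k$, and the composite $f_k\colon P \to H^{n_k}(P)[-n_k]\hookrightarrow Y_k$ is nonzero. Because the $Y_k$ are K-injective in pairwise distinct degrees, their term-wise product in $C(\mathcal{A})$ is K-injective and so represents the derived product in $D(\mathcal{A})$, while coinciding term-wise with the coproduct (only one $Y_k$ contributes to any given degree). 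Combining this with compactness gives
\[
\bigoplus_k \operatorname{Hom}(P,Y_k)\;\cong\;\operatorname{Hom}\Bigl(P,\bigoplus_k Y_k\Bigr)\;\cong\;\operatorname{Hom}\Bigl(P,\prod_k Y_k\Bigr)\;\cong\;\prod_k \operatorname{Hom}(P,Y_k),
\]
which forces all but finitely many $f_k$ to vanish, a contradiction.

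The main obstacle is the $\bigoplus\cong\prod$ identification used in the boundedness argument: one must replace the raw $H^{n_k}(P)[-n_k]$ by the single-degree K-injective $E_k[-n_k]$ before this identification becomes legitimate, since the derived product of the raw slices can fail to be term-wise (for instance when $n_k\to -\infty$ and injective resolutions spread to arbitrarily high degrees). Once this single-degree K-injective replacement is in place, the remainder is essentially formal: compactness squeezes an infinite direct sum into an infinite direct product of $\operatorname{Hom}$-groups, which is only possible if almost all summands vanish.
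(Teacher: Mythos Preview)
Your argument is correct and, in contrast to the paper's proof (which is nothing more than the two citations \cite[Proposition~2.10]{Hrbek/Pavon:2023} and \cite[Lemma~9.3.7]{Krause:2022}), supplies a direct self-contained treatment. So the comparison is simply that you unpack what the paper outsources; your boundedness argument in particular, with the passage to single-degree K-injective replacements $E_k[-n_k]$ before identifying $\bigoplus_k Y_k$ with $\prod_k Y_k$, is exactly the care needed and is cleanly executed.

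One step in the Noetherianness half deserves a further word. You write ``compactness yields $\operatorname{Hom}(P,H^n(P)[-n])=\operatorname{colim}_\alpha \operatorname{Hom}(P,N_\alpha[-n])$''. Compactness in the triangulated sense only asserts that $\operatorname{Hom}(P,-)$ commutes with coproducts; that it also commutes with \emph{filtered} homotopy colimits is true here but is not immediate at the level of bare triangulated categories (where general filtered hocolims are not even intrinsically defined). The cleanest justification goes through the enhancement: $D(\mathcal{A})$ is the homotopy category of a presentable stable $\infty$-category, in which coproduct-compactness and $\omega$-compactness coincide, and filtered colimits of objects of the heart compute the corresponding $\infty$-categorical filtered colimits because filtered colimits in $\mathcal{A}$ are exact. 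With that sentence added, your factorisation $P\to N_\alpha[-n]\hookrightarrow H^n(P)[-n]$ and the split-mono conclusion go through exactly as you wrote.
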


\begin{proof}
    This is \cite[Proposition 2.10]{Hrbek/Pavon:2023} combined with \cite[Lemma 9.3.7]{Krause:2022}.
\end{proof}

The second part of the following lemma, that requires approximation by compacts, will not be used later in this text; so can be freely be skipped.
Only the first part, that a single compact generator gives a weak generator of the heart will be used.
We have chosen to keep the second part included for interests sake.

\begin{lemma}
    \label{lem:compact_generator_to_weak}
    Let $\mathcal{A}$ be a locally Noetherian Grothendieck abelian category. If $D^{\leq 0}(\mathcal{A})$ is singly compactly generated, then $\operatorname{noeth}(\mathcal{A})$ admits a weak generator $G$. Furthermore, if $\mathcal{A}$ satisfies approximation by compacts, then the converse holds; in particular, $G\in \operatorname{noeth}(\mathcal{A})$ is a weak generator if, and only if, $\overline{\langle G \rangle}^{(-\infty,0]} = D^{\leq 0}(\mathcal{A})$.
\end{lemma}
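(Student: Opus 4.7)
For the forward direction, the plan is to take $G := H^0(P)$ where $P$ is the given compact generator of $D^{\leq 0}(\mathcal{A})$; this object lies in $\operatorname{noeth}(\mathcal{A})$ by \Cref{lem:coh_noeth}. To verify $G$ is a weak generator of $\operatorname{noeth}(\mathcal{A})$, let $E \in \operatorname{noeth}(\mathcal{A})$ with $\operatorname{Hom}_\mathcal{A}(G, E) = 0$. Two $t$-structure computations are needed: for $n \geq 1$, the orthogonality $\operatorname{Hom}(D^{\leq -1}, D^{\geq 0}) = 0$ applied to $P[n] \in D^{\leq -1}$ and $E \in D^{\geq 0}$ yields $\operatorname{Hom}(P[n], E) = 0$; for $n = 0$, the truncation triangle $\tau^{\leq -1} P \to P \to H^0(P)$ combined with the same vanishing identifies $\operatorname{Hom}(P, E)$ with $\operatorname{Hom}_\mathcal{A}(H^0(P), E) = 0$. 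Thus $E$ lies in $\bigl(\overline{\langle P\rangle}^{(-\infty,0]}\bigr)^\perp = D^{\geq 1}(\mathcal{A})$; combined with $E \in \mathcal{A} \subseteq D^{\leq 0}(\mathcal{A})$ this forces $E = 0$.

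For the converse, assume approximation by compacts and let $G \in \operatorname{noeth}(\mathcal{A})$ be a weak generator. First upgrade $G$ to a weak generator of all of $\mathcal{A}$: any nonzero $F \in \mathcal{A}$ contains a nonzero Noetherian subobject $F_0$ by local Noetherianity, and a nonzero $G \to F_0$ composes with $F_0 \hookrightarrow F$ to a nonzero $G \to F$. Apply approximation by compacts to $G$ (viewed in degree $0$) at $m = 0$ to produce a compact $P \in D^{\leq 0}(\mathcal{A})$ together with a surjection $H^0(P) \twoheadrightarrow G$; precomposition yields an injection $\operatorname{Hom}_\mathcal{A}(G, -) \hookrightarrow \operatorname{Hom}_\mathcal{A}(H^0(P), -)$, so $H^0(P)$ is itself a weak generator. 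The task reduces to showing $\mathcal{U} := \overline{\langle P\rangle}^{(-\infty,0]} = D^{\leq 0}(\mathcal{A})$, since $P$ is compact. One trivially has $\mathcal{U} \subseteq D^{\leq 0}$, so it remains to prove $\mathcal{U}^\perp \subseteq D^{\geq 1}$. The triangle $\tau^{\leq 0}T \to T \to \tau^{\geq 1}T$ and the vanishing $\operatorname{Hom}(\mathcal{U}, \tau^{\geq 1}T) = 0$ show $\mathcal{U}^\perp$ is closed under $\tau^{\leq 0}$, reducing to $T \in \mathcal{U}^\perp \cap D^{\leq 0}$. Inductively killing $H^{-k}(T)$ for each $k \geq 0$, using $\operatorname{Hom}(P[k], T) = 0$, the identification $\operatorname{Hom}(P, -) \cong \operatorname{Hom}_\mathcal{A}(H^0(P), -)$ on the heart, and weak generation, then gives $T = 0$. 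The ``in particular'' equivalence follows by running the same orthogonality argument with $G$ in place of $P$ (no compactness is required there).

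The main obstacle is the inductive step in the converse: the truncation triangle $\tau^{\leq -k-1}T \to \tau^{\leq -k}T \to H^{-k}(T)[k]$ gives a long exact sequence in which $\operatorname{Hom}(P, \tau^{\leq -k-1}T[1])$ appears, carrying higher-Ext contributions that do not vanish by $t$-structure alone. Controlling these terms is where the approximation-by-compacts hypothesis is essential: the compactness of $P$ (via \Cref{lem:coh_noeth}) bounds its cohomology, and the freedom in choosing the approximating compact at arbitrary depth allows one to resolve against the problematic truncations and complete the induction.
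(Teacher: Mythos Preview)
Your forward direction is correct and essentially identical to the paper's: both set $G:=H^0(P)$, use \Cref{lem:coh_noeth} to see $G\in\operatorname{noeth}(\mathcal A)$, and combine the orthogonality $\operatorname{Hom}(P[n],E)=0$ for $n\ge 1$ with the truncation identification $\operatorname{Hom}(P,E)\cong\operatorname{Hom}_{\mathcal A}(H^0(P),E)$ for $E$ in the heart to force $E\in D^{\ge 1}\cap\mathcal A=0$.

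For the converse, however, your final paragraph names the obstacle but does not remove it. From the triangle $\tau^{\le -k-1}T\to\tau^{\le -k}T\to H^{-k}(T)[k]$ and $\operatorname{Hom}(P[k],T)=0$ you only get that $\operatorname{Hom}_{\mathcal A}(H^0(P),H^{-k}(T))$ embeds into $\operatorname{Hom}(P[k],\tau^{\le -k-1}T[1])$, and this target lives in $D^{\le -k-2}$ where no $t$-structure vanishing applies. Your proposed fix---``the freedom in choosing the approximating compact at arbitrary depth''---cannot work: the compact object $P$ is the putative \emph{single} generator and is fixed once and for all; re-approximating $G$ by a different compact for each $k$ produces at best a countable generating set. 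Likewise, knowing that $P$ has bounded cohomology (via \Cref{lem:coh_noeth}) gives no vanishing for $\operatorname{Hom}(P,D^{\le -N})$, so it does not rescue the induction.

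The paper avoids this obstruction by a genuinely different device. Instead of truncation triangles in $D(\mathcal A)$, it picks an honest complex representing $E$ and works with the cycle object $Z^i(E)\in\mathcal A$: there is a literal chain map from $Z^i(E)$ placed in degree $i$ into $E$, and precomposing with a suitable $G\to Z^i(E)$ (produced from local Noetherianity and weak generation) yields a morphism $G[-i]\to E$ in $D(\mathcal A)$ that is visibly nonzero on $H^i$. This gives $\overline{\langle G\rangle}^{(-\infty,0]}=D^{\le 0}(\mathcal A)$ directly for the non-compact $G$, and only then does one invoke approximation by compacts to replace $G$ by a compact $P$ and repeat the argument. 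The chain-level construction is the missing idea: it manufactures the needed nonzero map without ever confronting the higher-Ext terms that block your inductive truncation approach.
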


\begin{proof}
    To start, we show that if $D^{\leq 0}(\mathcal{A})$ is singly compactly generated by $P$, that $G:=H^0 (P)$ is a weak generator for $\operatorname{noeth}(\mathcal{A})$. We know $G$ is Noetherian by \Cref{lem:coh_noeth}, so let $E \in \operatorname{noeth}(\mathcal{A})$ be nonzero. As $P$ compactly generates $D^{\leq 0}(\mathcal{A})$, there is a nonzero morphism $P\to E$. Indeed, as $P\in D^{\leq 0}(\mathcal{A})$ and $E\in \operatorname{noeth}(\mathcal{A})\subseteq D^{\leq 0}(\mathcal{A}) \cap D^{\geq 0}(\mathcal{A})$, there is $n\geq 0$ such that $\operatorname{Hom}(P[n],E)\not=0$. However, if $n>0$, then we have a contradiction as $E[-n]\in D^{\geq 0}(\mathcal{A})[-1]$.
    Now, taking cohomology, we have a nonzero morphism $H^0 (P) \to E$, which implies $G$ is a weak generator as desired. 
    
    Next, in the case $\mathcal{A}$ satisfies approximation by compacts, we show the converse. Assume there is a weak generator $G\in \operatorname{noeth}(\mathcal{A})$. Clearly, $D^{\geq 1}(\mathcal{A}) \subseteq (\overline{\langle G \rangle}^{(-\infty,0]})^\perp $, so we need to check the reverse containment. Choose $E\in (\overline{\langle G \rangle}^{(-\infty,0]})^\perp$ such that $H^i (E)\not=0$ for some $i\leq 0$.
    Now, we may represent $E$ by complex of objects in $\mathcal{A}$, and so, its $i$-th cycle $Z^i (E)\in \mathcal{A}$.
    Moreover, as $\mathcal{A}$ is locally Noetherian, there is an $E^\prime \in \operatorname{noeth}(\mathcal{A})$ and a morphism $E^\prime \to Z^i (E)$ such that the composition $E^\prime \to Z^i (E) \to H^i (E)$ is nonzero. 
    
    We claim there is a nonzero morphism $G[i] \to E$, which would give a contradiction to the fact that $E\in (\overline{\langle G \rangle}^{(-\infty,0]})^\perp$. As $G$ is a weak generator and $H^i (E)$ is nonzero, we can find a nonzero morphism $G \to Z^i (E)$. If $Z^i (E) [i] \to E$ is the inclusion of the cocycle sheaf viewed as a complex, then we have a nonzero morphism $G[i] \to Z^i (E) [i] \to E$ because its $i$-th cohomology is the nonzero morphism $G \to Z^i (E)$.

    So far, we have shown that $D^{\leq 0}(\mathcal{A}) = \overline{\langle G \rangle}^{(-\infty,0]}$ for some $G\in \operatorname{noeth}(\mathcal{A})$. Using that $\mathcal{A}$ satisifies approximation by compacts, there is a perfect complex $P$ and a morphism $P\to G$ which induces an isomorphism on cohomology in degrees $\geq 0$. This ensures $P\in D^{\leq 0}(\mathcal{A})$. Now, if we argue in a similar fashion above with $P$, we can show that $D^{\leq 0}(\mathcal{A}) = \overline{\langle P \rangle}^{(-\infty,0]}$ as desired.

    Lastly, we check that $G\in \operatorname{noeth}(\mathcal{A})$ is a weak generator if, and only if, $\overline{\langle G \rangle}^{(-\infty,0]} = D^{\leq 0}(\mathcal{A})$. If $G\in \operatorname{noeth}(\mathcal{A})$ is a weak generator, then we can argue as above to show $\overline{\langle G \rangle}^{(-\infty,0]} = D^{\leq 0}(\mathcal{A})$. Conversely, if $\overline{\langle G \rangle}^{(-\infty,0]} = D^{\leq 0}(\mathcal{A})$, then \cite[Lemma 3.1]{AlonsoTarrio/Lopez/Salorio:2003} can be used to check $G$ is a weak generator for $\operatorname{noeth}(\mathcal{A})$.
\end{proof}

\begin{lemma}
    \label{lem:weak_geneator_implies_Artinian_category}
    Let $\mathcal{A}$ be Noetherian $\operatorname{Hom}$-finite abelian category which admits intersections on decreasing sequences of subobjects. If $\mathcal{A}$ has a weak generator $G$, then $\mathcal{A}$ is Artinian (in particular, is a length category).
\end{lemma}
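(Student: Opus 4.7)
The plan is to show that the weak generator property, combined with Hom-finiteness and the intersection hypothesis, forces DCC on subobjects. Fix $E \in \mathcal{A}$ and a descending chain $E_0 \supseteq E_1 \supseteq \cdots$ of subobjects; my goal is to show it stabilizes.

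My first step will be to reduce to the case $\bigcap_i E_i = 0$. Setting $F := \bigcap_i E_i$, which exists by assumption, the quotients $E_i/F$ form a descending chain in $E_0/F$, and the bijection between subobjects of $E_0/F$ and subobjects of $E_0$ containing $F$ yields $\bigcap_i (E_i/F) = (\bigcap_i E_i)/F = 0$. Moreover $\{E_i\}$ stabilizes in $E$ if and only if $\{E_i/F\}$ is eventually zero in $E_0/F$. So it suffices to show: in a Noetherian Hom-finite abelian category with weak generator $G$ and intersections of decreasing sequences, any descending chain of subobjects with $\bigcap_i E_i = 0$ is eventually zero.

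The main step will be to apply the representable functor $\operatorname{Hom}(G,-)\colon \mathcal{A}\to \operatorname{Mod}(R)$. Representable functors preserve all limits that exist in the source, and by the intersection hypothesis the intersection $\bigcap_i E_i$ is precisely the sequential limit of the monomorphisms $\cdots \hookrightarrow E_2 \hookrightarrow E_1 \hookrightarrow E_0$ in $\mathcal{A}$. The corresponding limit in $\operatorname{Mod}(R)$ is simply the intersection of the submodules $\operatorname{Hom}(G, E_i) \subseteq \operatorname{Hom}(G, E_0)$, so
\[ \bigcap_i \operatorname{Hom}(G, E_i) \;=\; \operatorname{Hom}\bigl(G, \bigcap_i E_i\bigr) \;=\; 0. \]
By Hom-finiteness, $\operatorname{Hom}(G, E_0)$ is of finite length over $R$ and hence satisfies DCC on submodules; combined with vanishing intersection, the chain $\operatorname{Hom}(G, E_i)$ is eventually zero. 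The weak generator property then forces $E_i = 0$ for $i \gg 0$, as needed. Thus $E$ is Artinian; since $E$ was arbitrary and $\mathcal{A}$ is already Noetherian by assumption, $\mathcal{A}$ is a length category.

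The only nonformal ingredient is the commutation $\operatorname{Hom}(G, \bigcap_i E_i) = \bigcap_i \operatorname{Hom}(G, E_i)$, and this is the step I expect to need the most care. It rests on the intersection being a genuine sequential limit in $\mathcal{A}$, which is exactly the content of the hypothesis on decreasing chains; without it the argument does not get off the ground. All other steps — the subobject/quotient correspondence used in the reduction, DCC on submodules of a finite length $R$-module, and the single application of the weak generator property at the very end — are routine.
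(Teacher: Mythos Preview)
Your proof is correct and follows essentially the same route as the paper's: reduce to a descending chain with zero intersection by quotienting out $\bigcap_i E_i$, then use finite length of $\operatorname{Hom}(G,E_0)$ together with $\bigcap_i \operatorname{Hom}(G,E_i)=0$ to force some $\operatorname{Hom}(G,E_i)=0$, contradicting the weak generator property. Your write-up is in fact slightly more careful than the paper's in justifying the key commutation $\operatorname{Hom}(G,\bigcap_i E_i)=\bigcap_i\operatorname{Hom}(G,E_i)$ via preservation of limits by representable functors.
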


\begin{proof}
    While our inspiration for the proof is \cite[Lemma 2.4]{Paquette:2018}, there are differences, so we spell out the proof. Assume the contrary; that is, there is an $M\in \mathcal{A}$ and a strictly decreasing sequence of (necessarily nonzero) subobjects $M = M_0 \supsetneq M_1 \supsetneq \cdots$. Set $M_\infty := \cap^\infty_{n=0} M_n$, and define $N_i:= M_i/M_\infty$. Then $N:=N_0$ is an object of $\mathcal{A}$ with an infinite strictly decreasing chain $N_i$ of nonzero subobjects whose intersection is the zero object. 
    The hypothesis on $\mathcal{A}$ implies that $\operatorname{Hom}(M,N)$ is an Artinian $R$-module.
    Consequently, as $\cap_i \operatorname{Hom}(M,N_i)=0$ there exists an $i$ with $\operatorname{Hom}(M,N_i)$ zero.
    Since $M$ is a weak generator this implies $N_i$ is the zero, giving us a contradiction. This completes the proof.
\end{proof}

\begin{corollary}
    \label{cor:abelian_singly_compactly_generated_implies_Artinian}
    Let $\mathcal{A}$ be a locally Noetherian Grothendieck abelian such that $\operatorname{noeth}(\mathcal{A})$ is $\operatorname{Hom}$-finite. If $D^{\leq 0} (\mathcal{A})$ is singly compactly generated, then $\operatorname{noeth}(\mathcal{A})$ is a length category. 
\end{corollary}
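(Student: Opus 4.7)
The plan is to simply chain together \Cref{lem:compact_generator_to_weak} and \Cref{lem:weak_geneator_implies_Artinian_category}. First I would invoke the (easy) direction of \Cref{lem:compact_generator_to_weak}: singly compactly generated $D^{\leq 0}(\mathcal{A})$ implies that $\operatorname{noeth}(\mathcal{A})$ has a weak generator $G$. Then I would verify that $\operatorname{noeth}(\mathcal{A})$ satisfies the hypotheses of \Cref{lem:weak_geneator_implies_Artinian_category}: it is automatically an abelian category (closed under kernels, cokernels, and extensions in $\mathcal{A}$ since subobjects and quotients of Noetherian objects are Noetherian), it is Noetherian by definition, and it is $\operatorname{Hom}$-finite by hypothesis.

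The only slightly subtle point is checking that $\operatorname{noeth}(\mathcal{A})$ admits intersections of decreasing sequences of subobjects. Since $\mathcal{A}$ is Grothendieck, it has all small limits; in particular, for a chain $M_0 \supseteq M_1 \supseteq \cdots$ of subobjects of some $M \in \operatorname{noeth}(\mathcal{A})$, the intersection $\bigcap_i M_i$ exists in $\mathcal{A}$ as a subobject of $M$. As $M$ is Noetherian, so is every subobject of it, and hence $\bigcap_i M_i \in \operatorname{noeth}(\mathcal{A})$. This intersection taken inside $\operatorname{noeth}(\mathcal{A})$ agrees with the one in $\mathcal{A}$ by the universal property.

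With the hypotheses of \Cref{lem:weak_geneator_implies_Artinian_category} verified, I apply it to conclude that $\operatorname{noeth}(\mathcal{A})$ is Artinian. Being both Noetherian and Artinian, $\operatorname{noeth}(\mathcal{A})$ is a length category, as desired. There is no real obstacle here; the work has already been done in the two preceding lemmas, and this corollary is essentially just the composition of their statements, with the minor bookkeeping of verifying that intersections in the full subcategory $\operatorname{noeth}(\mathcal{A})$ behave as expected.
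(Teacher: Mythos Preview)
Your proposal is correct and follows exactly the paper's approach: the paper's proof is simply ``This is immediate from \Cref{lem:compact_generator_to_weak,lem:weak_geneator_implies_Artinian_category}.'' You have merely unpacked the routine verification that $\operatorname{noeth}(\mathcal{A})$ satisfies the hypotheses of \Cref{lem:weak_geneator_implies_Artinian_category}, which the paper leaves implicit.
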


\begin{proof}
    This is immediate from \Cref{lem:compact_generator_to_weak,lem:weak_geneator_implies_Artinian_category}.
\end{proof}

\subsection{Proper over a field}
\label{sec:results_proper_over_field}

In the case of proper schemes over a field, \Cref{cor:abelian_singly_compactly_generated_implies_Artinian} implies the scheme must be Artinian if the standard aisle were singly compactly generated. However, we show this more generally for Noetherian algebraic stacks.

\begin{proposition}
    \label{prop:proper_case}
    Let $X$ be an algebraic space proper over a field. Then $X$ is an (affine) Artinian scheme if, and only if, $D^{\leq 0}_{\operatorname{qc}}(X)$ is singly compactly generated.
\end{proposition}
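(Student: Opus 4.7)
The plan is to handle the two directions of the `if and only if' separately. For the easy implication that $X$ affine Artinian implies singly compactly generated: if $X = \operatorname{Spec}(A)$ for an Artinian $k$-algebra $A$, then $D_{\operatorname{qc}}(X)$ is equivalent to the derived category $D(A)$ of $A$-modules, and $A$, placed in degree zero, is a single compact generator of the standard aisle $D^{\leq 0}(A)$.

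For the converse, the strategy is to feed $\mathcal{A} = \operatorname{Qcoh}(X)$ into \Cref{cor:abelian_singly_compactly_generated_implies_Artinian}. The hypotheses I would verify are: $\operatorname{Qcoh}(X)$ is locally Noetherian Grothendieck abelian with $\operatorname{noeth}(\operatorname{Qcoh}(X)) = \operatorname{coh}(X)$, which is standard once we note $X$ is Noetherian (being proper over $k$); and $\operatorname{coh}(X)$ is $\operatorname{Hom}$-finite over $k$, which follows from properness, as $\operatorname{Hom}(F,G) = \Gamma(X, \mathcal{H}om(F,G))$ is a finite-dimensional $k$-vector space, hence of finite length as a $k$-module, for all $F, G \in \operatorname{coh}(X)$. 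The corollary then forces $\operatorname{coh}(X)$ to be a length category; in particular, $\mathcal{O}_X$ itself is Artinian as a coherent sheaf.

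Next, I would translate Artinian-ness of $\mathcal{O}_X$ into $\dim X = 0$. Suppose for contradiction there is a closed point $x \in X$ with $\dim \mathcal{O}_{X,x} > 0$, and let $I_x \subseteq \mathcal{O}_X$ be its ideal sheaf. Consider the descending chain
\begin{displaymath}
    \mathcal{O}_X \supsetneq I_x \supsetneq I_x^2 \supsetneq \cdots .
\end{displaymath}
This chain is strictly decreasing: at the stalk at $x$ the powers $\mathfrak{m}_x^n$ are pairwise distinct, since $\mathfrak{m}_x^n = \mathfrak{m}_x^{n+1}$ would imply $\mathfrak{m}_x^n = 0$ by Nakayama, contradicting $\dim \mathcal{O}_{X,x} > 0$. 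This contradicts $\mathcal{O}_X$ being Artinian, so we must have $\dim X = 0$.

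Finally, an algebraic space proper and zero-dimensional over $k$ is quasi-finite and proper, hence finite over $k$; since finite morphisms of algebraic spaces are affine and representable, $X = \operatorname{Spec}(A)$ for a finite-dimensional $k$-algebra $A$, which is necessarily Artinian. The main obstacle I expect is the stalkwise Nakayama step, where one needs the chain of ideal sheaves $\{I_x^n\}$ to remain strictly descending \emph{globally}; this is handled by observing that strict inequality at a single stalk already prevents equality as subsheaves. The rest of the argument is an assembly of \Cref{cor:abelian_singly_compactly_generated_implies_Artinian} with the standard facts about proper algebraic spaces over a field.
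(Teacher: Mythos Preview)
Your proof is correct and follows essentially the same route as the paper: verify the hypotheses of \Cref{cor:abelian_singly_compactly_generated_implies_Artinian} for $\mathcal{A}=\operatorname{Qcoh}(X)$, conclude that $\mathcal{O}_X$ is an Artinian object of $\operatorname{coh}(X)$, and deduce that $X$ is zero-dimensional and hence an affine Artinian scheme. The only point you leave implicit is the identification $D(\operatorname{Qcoh}(X))\cong D_{\operatorname{qc}}(X)$ (needed so that ``$D^{\leq 0}_{\operatorname{qc}}(X)$ singly compactly generated'' feeds into the corollary); your Nakayama argument on the chain $I_x\supsetneq I_x^2\supsetneq\cdots$ is a clean hands-on variant of the paper's appeal to stabilization of chains of closed subschemes.
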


\begin{proof}
    That $X$ an affine scheme implies $D^{\leq 0}_{\operatorname{qc}}(X)$ is singly compactly generated follows from \cite[\S 2.3]{Canonaco/Neeman/Stellari:2025}. So, we only need to show the converse. 
    First note $D(\operatorname{Qcoh}(X)) \cong D_{\operatorname{qc}}(X)$  by \cite[\href{https://stacks.math.columbia.edu/tag/08H1}{Tag 08H1}]{stacks-project} and that $\operatorname{Qcoh}(X)$ is locally Noetherian by \cite[\href{https://stacks.math.columbia.edu/tag/07UV}{Tag 07UV} \& \href{https://stacks.math.columbia.edu/tag/07UJ}{Tag 07UJ}]{stacks-project}.
    Moreover, As $X$ is proper over $k$, $\operatorname{coh}(X)$ is $\operatorname{Hom}$-finite; see e.g.\ \cite[\href{https://stacks.math.columbia.edu/tag/0D0T}{Tag 0D0T}]{stacks-project}. 
    Consequently, by \Cref{lem:weak_geneator_implies_Artinian_category}, $\operatorname{coh}(X)$ is a length category. Particularly, the structure sheaf $\mathcal{O}_X$ is an Artinian object of $\operatorname{coh}(X)$. Consequently, any increasing chain of closed subschemes of $X$ stabilizes, which tells us $|X|$ is Artinian. Hence, $X$ is affine by \cite[\href{https://stacks.math.columbia.edu/tag/06L3}{Tag 06LZ} \& \href{https://stacks.math.columbia.edu/tag/0ACA}{Tag 0ACA}]{stacks-project}.
\end{proof}

\begin{theorem}
    \label{thm:coarse_moduli}
    Let $\mathcal{X}$ be a tame Deligne--Mumford stack of positive Krull dimension. If $\mathcal{X}$ is proper over a field, then $D^{\leq 0}_{\operatorname{qc}}(\mathcal{X})$ cannot be singly compactly generated. 
\end{theorem}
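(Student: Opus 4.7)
The plan is to argue by contradiction, mirroring the strategy of \Cref{prop:proper_case}. Suppose $D^{\leq 0}_{\operatorname{qc}}(\mathcal{X})$ were singly compactly generated. I would first apply \Cref{cor:abelian_singly_compactly_generated_implies_Artinian} to force $\operatorname{coh}(\mathcal{X})$ to be a length category, and then derive a contradiction from the assumption $\dim \mathcal{X} \geq 1$ by producing a strictly descending infinite chain of coherent subsheaves of $\mathcal{O}_\mathcal{X}$.

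To invoke the corollary, I need to check that $\operatorname{Qcoh}(\mathcal{X})$ is a locally Noetherian Grothendieck abelian category with $\operatorname{Hom}$-finite Noetherian objects, and that the aisle of the standard $t$-structure on $D_{\operatorname{qc}}(\mathcal{X})$ agrees with that on $D(\operatorname{Qcoh}(\mathcal{X}))$. A tame Deligne--Mumford stack has unramified, hence quasi-finite separated, diagonal, so the example preceding \Cref{lem:coh_noeth} provides an equivalence $D(\operatorname{Qcoh}(\mathcal{X})) \cong D_{\operatorname{qc}}(\mathcal{X})$ respecting the standard $t$-structures, and $\operatorname{Qcoh}(\mathcal{X})$ is locally Noetherian with $\operatorname{noeth}(\operatorname{Qcoh}(\mathcal{X})) = \operatorname{coh}(\mathcal{X})$. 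Properness of $\mathcal{X}$ over $k$ together with finiteness of coherent cohomology for proper Deligne--Mumford stacks then yields $\operatorname{Hom}$-finiteness of $\operatorname{coh}(\mathcal{X})$ over $k$.

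For the contradiction I would pass to a Keel--Mori coarse moduli space $\pi \colon \mathcal{X} \to X$, where $X$ is a proper algebraic space over $k$ with $\dim X = \dim \mathcal{X} \geq 1$. Tameness implies that $\pi_* \colon \operatorname{Qcoh}(\mathcal{X}) \to \operatorname{Qcoh}(X)$ is exact and that $\mathcal{O}_X \to \pi_*\mathcal{O}_\mathcal{X}$ is an isomorphism; the projection formula then gives $\pi_*\pi^* F \cong F$ for all $F \in \operatorname{Qcoh}(X)$. I would pick a closed point $x \in |X|$ whose local ring has positive dimension; Krull's intersection theorem provides a strict chain $\mathcal{I}_x \supsetneq \mathcal{I}_x^2 \supsetneq \cdots$ in $\operatorname{coh}(X)$. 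Setting $\mathcal{G}_n \subseteq \mathcal{O}_\mathcal{X}$ to be the image of the canonical map $\pi^*\mathcal{I}_x^n \to \mathcal{O}_\mathcal{X}$, exactness of $\pi_*$ sends the factorization through the image to $\mathcal{I}_x^n \twoheadrightarrow \pi_*\mathcal{G}_n \hookrightarrow \mathcal{O}_X$ whose composite is the original inclusion, forcing $\pi_*\mathcal{G}_n = \mathcal{I}_x^n$. Hence $\mathcal{G}_n \supsetneq \mathcal{G}_{n+1}$ is a strictly descending chain in $\operatorname{coh}(\mathcal{X})$, contradicting Artinianity.

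The hard part will be pinning down the two inputs from stack theory: the exactness of $\pi_*$ and the identification $\pi_*\mathcal{O}_\mathcal{X} \cong \mathcal{O}_X$ for coarse moduli of tame Deligne--Mumford stacks (which I would cite from Abramovich--Olsson--Vistoli), and confirming that the diagonal assumptions used in the example after \Cref{def:approx} are indeed met in the tame Deligne--Mumford setting via finiteness of inertia. Once these are in place, the image/chain manipulation is routine.
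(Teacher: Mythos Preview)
Your proposal is correct and follows essentially the same route as the paper: verify the hypotheses of \Cref{cor:abelian_singly_compactly_generated_implies_Artinian}, pass to the coarse moduli space $\pi\colon\mathcal{X}\to X$, and exploit exactness of $\pi_*$ together with the unit isomorphism $\operatorname{id}\to\pi_*\pi^*$ to transfer (non-)Artinianity across $\pi$ via images of pulled-back subobjects. The only cosmetic difference is the endgame: the paper shows in general that $\operatorname{coh}(\mathcal{X})$ Artinian forces $\operatorname{coh}(X)$ Artinian and then invokes \Cref{prop:proper_case}, whereas you exhibit one explicit non-stabilizing chain in $\operatorname{coh}(\mathcal{X})$ from powers of an ideal sheaf on $X$.
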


\begin{proof}
    Assume the contrary, that is, $D^{\leq 0}_{\operatorname{qc}}(\mathcal{X})$ is singly compactly generated. 
    To start, note that \cite[Theorem 1.2]{Hall/Neeman/Rydh:2019} tells us $D(\operatorname{Qcoh}(X)) \cong D_{\operatorname{qc}}(X)$ as $\mathcal{X}$ has affine diagonal. 
    Furthermore, $\operatorname{Qcoh}(X)$ is a locally Noetherian by \cite[Proposition 15.4]{Laumon/Moret-Bailly:2000} (or \cite[\href{https://stacks.math.columbia.edu/tag/0GRE}{Tag 0GRE}]{stacks-project}) and the fact that coherent sheaves are Noetherian (this can be proven by taking a flat cover).
    Moreover, the argument for showing $\operatorname{coh}$ is $\operatorname{Hom}$-finite for a proper scheme/space over a field applies verbatim to algebraic stacks.
    Thus, by \Cref{cor:abelian_singly_compactly_generated_implies_Artinian}, $\operatorname{coh}(\mathcal{X})$ is an Artinian category.
    
    By \cite[Theorem 11.1.2 \& Proposition 11.3.4]{Olsson:2016}, there is a proper morphism $\pi\colon \mathcal{X} \to X$ with $X$ an algebraic space satisfying two key properties. 
    First, $\pi_\ast$ is an exact functor on quasi-coherent sheaves. Second, the unit of the (underived) pull/push adjunction $1\to \pi_\ast \pi^\ast$ is an isomorphism on quasi-coherent sheaves (cf.\ \cite[Proposition 4.5]{Alper:2013}). 
    Moreover, as $\mathcal{X}$ is proper over $k$ and $\pi$ is a proper morphism, it follows that $X$ is proper over $k$ (use \cite[\href{https://stacks.math.columbia.edu/tag/0CQK}{Tag 0CQK}]{stacks-project} as $\pi$ is surjective).

    Now, we finish the proof by showing $\operatorname{coh}(X)$ is Artinian. Let $E\in \operatorname{coh}(X)$. Consider a decreasing sequence of subobjects $\cdots \subseteq E_j \subseteq \cdots \subseteq E_0 =: E$. It need not be the case that $\{\pi^\ast E_j\}$ forms a decreasing sequence in $\pi^\ast E$. However, the sequence $\{A_j:= \operatorname{im}(\pi^\ast (E_j \to E))\}$ does. Thus, as $\operatorname{coh}(\mathcal{X})$ is Artinian, we can find a $j\geq 1$ such that $A_i = A_j$ for all $i\geq j$. Applying $\pi_\ast$, it follows that $E_i = E_j$  for all $i\geq j$. Indeed, exactness of $\pi_\ast$ and the unit being an isomorphism tells us for each $k$,
    \begin{displaymath}
        \pi_\ast (\operatorname{im}(\pi^\ast (E_k \to E))) = \operatorname{im} (\pi_\ast \pi^\ast (E_k \to E)) = \operatorname{im}(E_k \to E) = E_k.
    \end{displaymath}
    Consequently, $\operatorname{coh}(X)$ must be Artinian, and so, \Cref{prop:proper_case} tells us $X$ is an Artinian scheme. However, this is absurd as $\mathcal{X}$, and hence $X$, has positive Krull dimension    (e.g.\ use that $\pi$ is a homeomorphism on topological spaces).
\end{proof}

\begin{remark}
    There is a generalization possible to \Cref{thm:coarse_moduli}.
    Suppose $\mathcal{X}$ is a (potentially non-DM) algebraic stack proper over a field which allows for a separated good moduli space in the sense of \cite{Alper:2013}.
    Then the same proof  as above works, although only for $D(\operatorname{Qcoh}(\mathcal{X}))$ without extra conditions on the algebraic stack to ensure $D(\operatorname{Qcoh}(\mathcal{X}))=D_{\operatorname{qc}}(\mathcal{X})$.
    Moreover, note that the separatedness of the good moduli morphisms is not as readily deducible as for coarse moduli \cite{Alper/Halpern-Leistner/Heinloth:2023}.
\end{remark}

\subsection{Proper over general base}
\label{sec:results_proper_general}

Next, we show how to use the above to obtain statements over an arbitrary base. We start with a lemma.

\begin{lemma}
    \label{lem:affine_base_change}
    Let $f\colon Y \to X$ be an affine morphism of Noetherian algebraic spaces. If $D^{\operatorname{\leq 0}}_{\operatorname{qc}}(X)$ is singly compactly generated, then so is $D^{\operatorname{\leq 0}}_{\operatorname{qc}}(Y)$.
\end{lemma}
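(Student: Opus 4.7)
The plan is to transport a single compact generator $P$ of $D^{\leq 0}_{\operatorname{qc}}(X)$ along $f$ via derived pullback: set $Q := Lf^* P$ and show that $Q$ is a single compact generator of $D^{\leq 0}_{\operatorname{qc}}(Y)$. Membership $Q \in D^{\leq 0}_{\operatorname{qc}}(Y)$ is immediate from the standard fact that $Lf^*$ is right $t$-exact. Compactness of $Q$ follows because $f$ being affine means $R^i f_* = 0$ for $i > 0$ on quasi-coherent sheaves, so $Rf_* : D_{\operatorname{qc}}(Y) \to D_{\operatorname{qc}}(X)$ is $t$-exact, and in particular preserves small coproducts (coproducts are preserved by the underived $f_*$ on quasi-coherent sheaves); since the right adjoint preserves coproducts, $Lf^*$ preserves compact objects.

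For generation, the strategy is to identify right orthogonals: it suffices to show $\{Q[n] : n \geq 0\}^\perp = D^{\geq 1}_{\operatorname{qc}}(Y)$, because both sides arise as $\mathcal{U}^\perp$ for aisles $\mathcal{U} \subseteq D_{\operatorname{qc}}(Y)$ (the compactly generated aisle on one side, and $D^{\leq 0}_{\operatorname{qc}}(Y)$ on the other), and aisles are recovered from their right orthogonals. The central tool is the adjunction identity
\[
    \operatorname{Hom}_Y(Q[n], E) = \operatorname{Hom}_X(P[n], Rf_* E).
\]
The inclusion $\supseteq$ is formal: if $E \in D^{\geq 1}_{\operatorname{qc}}(Y)$, then $Rf_* E \in D^{\geq 1}_{\operatorname{qc}}(X)$ by left $t$-exactness of $Rf_*$, and the $\operatorname{Hom}$ vanishes because $P[n] \in D^{\leq 0}_{\operatorname{qc}}(X)$.

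The reverse inclusion is where the single compact generator assumption is used. If all the right-hand $\operatorname{Hom}$'s vanish, then $Rf_* E$ lies in $\{P[n] : n \geq 0\}^\perp = D^{\geq 1}_{\operatorname{qc}}(X)$ by hypothesis on $P$, so $H^i(Rf_* E) = 0$ for $i \leq 0$. Using the $t$-exactness of $Rf_*$ (from $f$ being affine), this cohomology is $f_* H^i(E)$; and using that $f_* : \operatorname{Qcoh}(Y) \to \operatorname{Qcoh}(X)$ is faithful for an affine morphism (which reduces étale-locally to the statement that restriction of scalars along a ring map is faithful), we conclude $H^i(E) = 0$ for $i \leq 0$, i.e., $E \in D^{\geq 1}_{\operatorname{qc}}(Y)$.

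Since every step rests on standard properties of affine morphisms (vanishing higher direct images and faithfulness of pushforward on quasi-coherent sheaves) and formal adjunction manipulations with $t$-structures, I do not anticipate a genuine obstacle. The only point requiring mild care is justifying $H^i(Rf_* E) = f_* H^i(E)$ for possibly unbounded $E$, which follows from $Rf_*$ having cohomological amplitude concentrated in degree zero in the affine case.
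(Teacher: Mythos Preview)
Your proof is correct and follows essentially the same route as the paper: pull back the compact generator along $\mathbf{L}f^*$, then use the adjunction together with $t$-exactness and conservativity/faithfulness of $f_*$ (valid since $f$ is affine) to identify the right orthogonals. The only cosmetic differences are that the paper observes $\mathbf{L}f^* G = f^* G$ is perfect (hence compact) directly, whereas you deduce compactness from $\mathbf{R}f_*$ preserving coproducts, and the paper phrases the key point as ``$f_*$ reflects isomorphisms'' rather than ``$f_*$ is faithful''.
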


\begin{proof}
    Consider a perfect complex $G$ on $X$ satisifying $\overline{\langle G \rangle}^{(-\infty,0]} = D^{\operatorname{\leq 0}}_{\operatorname{qc}}(X)$. If $G$ is perfect, then $f^\ast G \cong \mathbf{L}f^\ast G$, Hence, $\mathbf{L}f^\ast G \in D^{\operatorname{\leq 0}}_{\operatorname{qc}}(Y)$, which implies $\overline{\langle \mathbf{L} f^\ast G \rangle}^{(-\infty,0]} \subseteq D^{\operatorname{\leq 0}}_{\operatorname{qc}}(Y)$. So, we need to check the reverse containment. However, this is equivalent to $D^{\geq 0}_{\operatorname{qc}}(Y) \supseteq (\langle \mathbf{L} f^\ast G \rangle^{(-\infty,0]})^{\perp}$. As $f$ is affine, we know that $f_\ast \colon \operatorname{Qcoh}(Y) \to \operatorname{Qcoh}(X)$ is exact and $f_\ast=\mathbf{R}f_\ast$ reflects isomorphisms. 
    It follows that $E\in D_{\operatorname{qc}}(Y)$ is in $D^{\leq 0}_{\operatorname{qc}}(Y)$ if and only if $f_\ast E\in D^{\leq 0}_{\operatorname{qc}}(X)$ (and similarly for $D^{\geq 0}$). Now, let $E\in D_{\operatorname{qc}}(Y)$ be such that $\operatorname{Hom}(\mathbf{L}f^\ast G,E[n])=0$ for all $n \geq 0$. Then, via adjunction, $\operatorname{Hom}(G,f_\ast E[n]) = 0$ for all $n \geq 0$. From our hypothesis on $G$, it follows that $f_\ast E\in D^{\leq 0}_{\operatorname{qc}}(X) = \overline{\langle G \rangle}^{(-\infty,0]}$. Yet, this implies $E\in D^{\geq 0}_{\operatorname{qc}}(Y)$, which is what we needed to show. 
\end{proof}

\begin{proposition}
    \label{prop:positive_relative_dimension}
    Let $f\colon Y \to X$ be a proper morphism of Noetherian algebraic spaces. If $D^{\operatorname{\leq 0}}_{\operatorname{qc}}(Y)$ is singly compact generated, then $f$ is finite. Equivalently, if $f$ is has positive relative dimension (i.e.\ has at least one fiber of positive Krull dimension), then $D^{\operatorname{\leq 0}}_{\operatorname{qc}}(Y)$ is not singly compact generated.
\end{proposition}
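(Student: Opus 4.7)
The plan is to reduce this to the proper-over-a-field case of \Cref{prop:proper_case} by restricting to the fiber over a suitable closed point, and using \Cref{lem:affine_base_change} to transport the single-compact-generation hypothesis through the base change. I would argue contrapositively: assuming $f$ has at least one fiber of positive Krull dimension, I show that $D^{\leq 0}_{\operatorname{qc}}(Y)$ cannot be singly compactly generated.

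First I would locate a \emph{closed} point $x_0 \in |X|$ whose fiber $Y_{x_0}$ has positive Krull dimension. For a proper morphism of Noetherian algebraic spaces, the subset
\[
\{ x \in |X| \mid \dim Y_x \geq 1 \}
\]
is closed in $|X|$ by upper semi-continuity of fiber dimension, which is classical for schemes and descends to algebraic spaces along an \'{e}tale cover of $X$. By hypothesis this closed subset is nonempty, and since $X$ is Noetherian it must contain a closed point $x_0$. The inclusion $\operatorname{Spec}(\kappa(x_0)) \hookrightarrow X$ corresponding to $x_0$ is a closed immersion, hence in particular affine.

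Next, affineness is stable under base change, so the projection $Y_{x_0} \to Y$ is affine. Supposing for contradiction that $D^{\leq 0}_{\operatorname{qc}}(Y)$ is singly compactly generated, \Cref{lem:affine_base_change} yields the same for $D^{\leq 0}_{\operatorname{qc}}(Y_{x_0})$. On the other hand, $Y_{x_0}$ is an algebraic space proper over the field $\kappa(x_0)$, being a base change of the proper morphism $f$, so \Cref{prop:proper_case} forces $Y_{x_0}$ to be an affine Artinian scheme and hence of Krull dimension zero. This contradicts the choice of $x_0$ and concludes the proof.

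The main obstacle is the upper-semicontinuity-plus-closed-point step at the start: one needs that for proper morphisms of Noetherian algebraic spaces the positive-fiber-dimension locus is closed in $|X|$ and that every nonempty closed subset of the Noetherian space $|X|$ contains a closed point. Both facts are standard for schemes and transfer cleanly to algebraic spaces via \'{e}tale covers; everything else in the argument is formal from \Cref{lem:affine_base_change} and \Cref{prop:proper_case}.
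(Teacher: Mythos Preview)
Your argument is correct and follows essentially the same strategy as the paper: pass to a fiber, use \Cref{lem:affine_base_change} to transport single compact generation along the affine base change, and then invoke \Cref{prop:proper_case}. The only real difference is that you restrict to \emph{closed} points (where the inclusion is a closed immersion, hence obviously affine) and therefore need the upper semi-continuity of fiber dimension to locate a closed point with positive-dimensional fiber; the paper instead observes that \emph{every} morphism $\operatorname{Spec}(k)\to X$ from a field is affine when $X$ is a Noetherian algebraic space (see \cite[\href{https://stacks.math.columbia.edu/tag/09TF}{Tag 09TF}]{stacks-project}), so it can treat all fibers at once and conclude directly that $f$ is quasi-finite, hence finite. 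Your route trades one standard ingredient (affineness of arbitrary field-points) for another (upper semi-continuity), and proves the contrapositive formulation rather than the direct one; both are entirely fine.
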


\begin{proof}
    Let $\operatorname{Spec}(k)\to X$ be a morphism from a field. Consider the fibered square,
    \begin{displaymath}
    \begin{tikzcd}
        {Y\times_X\operatorname{Spec}(k)} & {\operatorname{Spec}(k)} \\
        Y & X
        \arrow[from=1-1, to=1-2]
        \arrow[from=1-1, to=2-1]
        \arrow[from=1-2, to=2-2]
        \arrow["f", from=2-1, to=2-2]
    \end{tikzcd}
    \end{displaymath}
    Here, $Y\times_X \operatorname{Spec}(k)$ is a proper algebraic $k$-space. Note that $\operatorname{Spec}(k) \to X$ is affine (see e.g.\ \cite[\href{https://stacks.math.columbia.edu/tag/09TF}{Tag 09TF}]{stacks-project}) and so, by base change, $Y\times_X \operatorname{Spec}(k) \to Y$ is also affine. Using \Cref{lem:affine_base_change}, we know that $D^{\operatorname{\leq 0}}_{\operatorname{qc}}(Y)$ singly compactly generated implies $D^{\leq 0}_{\operatorname{qc}}(Y\times_X\operatorname{Spec}(k))$ is so too. However, \Cref{prop:proper_case} tells us that $Y\times_X \operatorname{Spec}(k)$ is an affine Artinian scheme. So, \cite[\href{https://stacks.math.columbia.edu/tag/06RW}{Tag 06RW}]{stacks-project} implies $f$ is locally quasi-finite. Moreover, by \cite[\href{https://stacks.math.columbia.edu/tag/0418}{Tag 0418}]{stacks-project} $f$ is representable (by schemes). Consequently, as proper quasi-finite morphisms of schemes are finite, $f$ is finite as desired.
\end{proof}

\begin{appendix}
\section{An alternative approach: Stability}
\label{sec:appendix_semistable}

Initially, we showed failure of singly compact generated standard aisles for smooth complex projective curves. Particularly, we used (semi)stability of vector bundles on said curves. See \cite{Huybrechts:2014,Macri/Schmidt:2017} for background. So, for sake of interest:


\begin{theorem}
    \label{thm:aisle_on_curve_single_object_fails}
    If $X$ is a smooth projective curve over $\mathbb{C}$, then $D^{\leq 0}_{\operatorname{qc}}(X)$ is not singly compactly generated.
\end{theorem}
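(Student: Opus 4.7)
The plan is to apply \Cref{lem:compact_generator_to_weak} to reduce the problem to a statement inside $\operatorname{coh}(X)$, and then rule out the existence of a weak generator by a direct Hom-vanishing argument using semistability of vector bundles on $X$. Throughout, $\operatorname{coh}(X)$ is identified with the heart of the standard $t$-structure on $D_{\operatorname{qc}}(X)$, and $\operatorname{Qcoh}(X)$ is locally Noetherian.

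Assume for contradiction that $D^{\leq 0}_{\operatorname{qc}}(X)$ is singly compactly generated. Then \Cref{lem:compact_generator_to_weak} yields a weak generator $G \in \operatorname{coh}(X)$. Since $X$ is a smooth projective curve, every coherent sheaf splits as a direct sum of its torsion part and a locally free part; write $G = T \oplus V$ with $T$ torsion and $V$ a vector bundle. The strategy is to produce a nonzero line bundle $L$ on $X$ with $\operatorname{Hom}(G, L) = 0$, contradicting the weak-generator property of $G$. The torsion part poses no issue: since $L$ is torsion-free and $X$ is a smooth curve, $\operatorname{Hom}(T, L) = 0$ for every line bundle $L$.

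It remains to arrange $\operatorname{Hom}(V, L) = 0$. I would invoke the Harder--Narasimhan filtration of $V$ and let $\mu_{\min}(V)$ denote the slope of its bottom semistable quotient. For any semistable bundle $F$ with $\mu(F) > \deg L$, slope-stability forces $\operatorname{Hom}(F, L) = 0$; a d\'{e}vissage along the HN filtration of $V$ then upgrades this to $\operatorname{Hom}(V, L) = 0$ as soon as $\deg L < \mu_{\min}(V)$. Taking $L = \mathcal{O}_X(-nP)$ for a closed point $P$ and $n \gg 0$ supplies such an $L$, and the desired contradiction follows.

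The main difficulty is essentially bookkeeping: cleanly invoking the Harder--Narasimhan filtration and the slope inequality for semistable bundles on a curve, and noting that on any smooth projective curve there exist line bundles of arbitrarily negative degree. On $\mathbb{P}^1_\mathbb{C}$ the argument collapses further, since Grothendieck's splitting theorem gives $V \cong \bigoplus_i \mathcal{O}(a_i)$ and one computes directly $\operatorname{Hom}(\mathcal{O}(a_i), \mathcal{O}(d)) = H^0(\mathcal{O}(d-a_i)) = 0$ for $d < \min_i a_i$, bypassing the stability machinery entirely.
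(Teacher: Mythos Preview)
Your argument is correct and follows the same overall strategy as the paper's appendix proof: both reduce to a Hom-vanishing against the semistable Harder--Narasimhan factors of the locally free part of $G$. Two differences are worth noting.

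First, you invoke \Cref{lem:compact_generator_to_weak} to pass immediately to a weak generator $G\in\operatorname{coh}(X)$, whereas the paper works directly with a compact aisle-generator and argues at the level of the $t$-structure (including some extra replacements of $G$, e.g.\ absorbing the torsion part into line bundles). Your route is shorter and uses machinery the paper already sets up.

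Second, and more significantly, your choice of target object is simpler. The paper looks for a semistable $E$ with $\mu(E)<\mu_{\min}(G)$ and then splits into cases on the genus: for $g=0$ it uses line bundles; for $g=1$ it cites Atiyah's classification; for $g>1$ it invokes nonemptiness of the moduli of semistable bundles. You observe that line bundles are themselves stable of slope equal to their degree, and that $\mathcal{O}_X(-nP)$ has arbitrarily negative degree on any smooth projective curve. This single observation handles all genera uniformly and avoids the moduli-theoretic references entirely, at no loss of generality. The paper's version has the minor conceptual advantage of emphasising that the obstruction lives among semistable bundles of all ranks, but for the purpose of the theorem your line-bundle argument is both sufficient and more elementary.
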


\begin{proof}
    Assume the contrary; that is, we have an object $G\in D^b_{\operatorname{coh}}(X)$ satisfying $\overline{\langle G \rangle}^{(-\infty,0]} = D^{\leq 0}_{\operatorname{qc}}(X)$. To start, we make a few reductions regarding the appearance of $G$. As $D^b_{\operatorname{coh}}(X)$ is a hereditary category, $G\cong T \oplus F$ where $T$ is a direct sum of shifts of torsion sheaves and $F$ is a direct sum of shifts of torsion free sheaves. So, without loss of generality, we may impose $G$ be a coherent sheaf. However, torsion sheaves are iterated extensions of the structure sheaves for closed points, whereas the latter objects are cones of line bundles. Hence, if needed, we can assume $G$ is torsion free (i.e.\ $T\cong 0$). This ensures $G$ is a vector bundle on $X$. Consider the Harder--Narasimhan filtration of $G$, $0 :=G_0 \subseteq \cdots \subseteq G_n := G$. Consequently, $G$ is an iterated extension of the coherent sheaves $G_i/G_{i-1}$, which allows us to replace $G$ by the direct sum of the $G_i/G_{i-1}$. Here, each $G_i/G_{i-1}$ is a semistable vector bundle, and $\mu(G_1/G_0)> \cdots > \mu(G_n/G_{n-1})$. 
    
    Now, we can find the desired contradiction. If there is a semistable object $E$ in $\operatorname{coh}(X)$ such that $\mu(E)<\mu(G_n/G_{n-1})$, then $\operatorname{Hom}(G,E)=0$, which implies $E\in (\overline{\langle G \rangle}^{(-\infty,0]})^\perp$ (see e.g.\ \cite[Lemma 3.1]{AlonsoTarrio/Lopez/Salorio:2003}). Moreover, for any such $E$, we know that $E[1]\in D^{\geq 0}_{\operatorname{qc}}(X)$, which would be absurd. So, we would complete the proof if such an object existed. Fortunately, this is the case if one looks at the genus $g$ of $X$ 
    \begin{itemize}
        \item $g=0$: The stable vector bundles coincide with line bundles, and as each have the same slope. Hence, we can reduce to computing sheaf cohomology on $\mathbb{P}^1_{\mathbb{C}}$.
        \item $g=1$: Choose coprime integers $r,d$ such that $\frac{d}{r}<\mu(G_n/G_{n-1})$. There is a stable vector bundle of slope $\frac{d}{r}$. See \cite{Atiyah:1957, Polishchuk:2003}. 
        \item $g>1$: Choose integers $r,d$ such that $\frac{d}{r}<\mu(G_n/G_{n-1})$. Then, by \cite[Theorem 2.10]{Macri/Schmidt:2017}, the coarse moduli space parameterizing $S$-equivalence classes of semistable vector bundles on $X$ is nonempty. Loc.\ cit.\ is a summarization of \cite{Drezet/Narasimhan:1989, Newstead:2012, Seshadri:2982,LePotier:1997,Huybrechts/Lehn:2010}. 
        \qedhere
    \end{itemize}
\end{proof}

\end{appendix}

\bibliographystyle{alpha}
\bibliography{mainbib}

\begin{thebibliography}{BBDG18}

\bibitem[AHLH23]{Alper/Halpern-Leistner/Heinloth:2023}
Jarod Alper, Daniel Halpern-Leistner, and Jochen Heinloth.
\newblock Existence of moduli spaces for algebraic stacks.
\newblock {\em Invent. Math.}, 234(3):949--1038, 2023.

\bibitem[Alp13]{Alper:2013}
Jarod Alper.
\newblock Good moduli spaces for {Artin} stacks.
\newblock {\em Ann. Inst. Fourier}, 63(6):2349--2402, 2013.

\bibitem[Ati57]{Atiyah:1957}
Michael~F. Atiyah.
\newblock Vector bundles over an elliptic curve.
\newblock {\em Proc. Lond. Math. Soc. (3)}, 7:414--452, 1957.

\bibitem[ATLS03]{AlonsoTarrio/Lopez/Salorio:2003}
Leovigildo Alonso~Tarr{\'{\i}}o, Ana~Jerem{\'{\i}}as L{\'o}pez, and Mar{\'{\i}}a Jos{\'e}~Souto Salorio.
\newblock Construction of {{\(t\)}}-structures and equivalences of derived categories.
\newblock {\em Trans. Am. Math. Soc.}, 355(6):2523--2543, 2003.

\bibitem[BBDG18]{Beilinson/Berstein/Deligne/Gabber:2018}
Alexander Beilinson, Joseph Bernstein, Pierre Deligne, and Ofer Gabber.
\newblock {\em Faisceaux pervers. {Actes} du colloque ``{Analyse} et {Topologie} sur les {Espaces} {Singuliers}''. {Partie} {I}}, volume 100 of {\em Ast{\'e}risque}.
\newblock Paris: Soci{\'e}t{\'e} Math{\'e}matique de France (SMF), 2nd edition edition, 2018.

\bibitem[CNS25]{Canonaco/Neeman/Stellari:2025}
Alberto Canonaco, Amnon Neeman, and Paolo Stellari.
\newblock Weakly approximable triangulated categories and enhancements: a survey.
\newblock {\em Boll. Unione Mat. Ital.}, 18(1):109--134, 2025.

\bibitem[DN89]{Drezet/Narasimhan:1989}
J.-M. Drezet and M.~S. Narasimhan.
\newblock Groupe de {Picard} des vari{\'e}t{\'e}s de modules de fibr{\'e}s semi-stable sur les courbes alg{\'e}briques. ({Picard} groups of moduli varieties of semi- stable bundles on algebraic curves).
\newblock {\em Invent. Math.}, 97(1):53--94, 1989.

\bibitem[HL10]{Huybrechts/Lehn:2010}
Daniel Huybrechts and Manfred Lehn.
\newblock {\em The geometry of moduli spaces of sheaves}.
\newblock Cambridge: Cambridge University Press, 2nd ed. edition, 2010.

\bibitem[HLLP25]{Hall/Lamarche/Lank/Peng:2025}
Jack Hall, Alicia Lamarche, Pat Lank, and Fei Peng.
\newblock Compact approximation and descent for algebraic stacks.
\newblock \href{https://arxiv.org/abs/2504.21125}{arXiv:2504.21125}, 2025.

\bibitem[HNR19]{Hall/Neeman/Rydh:2019}
Jack Hall, Amnon Neeman, and David Rydh.
\newblock One positive and two negative results for derived categories of algebraic stacks.
\newblock {\em J. Inst. Math. Jussieu}, 18(5):1087--1111, 2019.

\bibitem[HP23]{Hrbek/Pavon:2023}
Michal Hrbek and Sergio Pavon.
\newblock Singular equivalences to locally coherent hearts of commutative {Noetherian} rings.
\newblock {\em J. Algebra}, 632:117--153, 2023.

\bibitem[HR17]{Hall/Rydh:2017}
Jack Hall and David Rydh.
\newblock Perfect complexes on algebraic stacks.
\newblock {\em Compos. Math.}, 153(11):2318--2367, 2017.

\bibitem[Huy14]{Huybrechts:2014}
D.~Huybrechts.
\newblock Introduction to stability conditions.
\newblock In {\em Moduli spaces. Based on lectures of a programme on moduli spaces at the Isaac Newton Institute for Mathematical Sciences, Cambridge, UK, January 4 -- July 1, 2011}, pages 179--229. Cambridge: Cambridge University Press, 2014.

\bibitem[Kra22]{Krause:2022}
Henning Krause.
\newblock {\em Homological theory of representations}, volume 195 of {\em Cambridge Studies in Advanced Mathematics}.
\newblock Cambridge University Press, Cambridge, 2022.

\bibitem[KV88]{Keller/Vossieck:1988}
B.~Keller and D.~Vossieck.
\newblock Aisles in derived categories.
\newblock {\em Bull. Soc. Math. Belg., S{\'e}r. A}, 40(2):239--253, 1988.

\bibitem[LMB00]{Laumon/Moret-Bailly:2000}
G{\'e}rard Laumon and Laurent Moret-Bailly.
\newblock {\em Champs alg{\'e}briques}, volume~39 of {\em Ergeb. Math. Grenzgeb., 3. Folge}.
\newblock Berlin: Springer, 2000.

\bibitem[LN07]{Lipman/Neeman:2007}
Joseph Lipman and Amnon Neeman.
\newblock Quasi-perfect scheme-maps and boundedness of the twisted inverse image functor.
\newblock {\em Illinois J. Math.}, 51(1):209--236, 2007.

\bibitem[LO08a]{Laszlo/Olsson:2008a}
Yves Laszlo and Martin Olsson.
\newblock The six operations for sheaves on {Artin} stacks. {I}: {Finite} coefficients.
\newblock {\em Publ. Math., Inst. Hautes {\'E}tud. Sci.}, 107:109--168, 2008.

\bibitem[LO08b]{Laszlo/Olsson:2008b}
Yves Laszlo and Martin Olsson.
\newblock The six operations for sheaves on {Artin} stacks. {II}: {Adic} coefficients.
\newblock {\em Publ. Math., Inst. Hautes {\'E}tud. Sci.}, 107:169--210, 2008.

\bibitem[LP97]{LePotier:1997}
Joseph Le~Potier.
\newblock {\em Lectures on vector bundles}, volume~54 of {\em Camb. Stud. Adv. Math.}
\newblock Cambridge: Cambridge University Press, 1997.

\bibitem[MS17]{Macri/Schmidt:2017}
Emanuele Macr{\`{\i}} and Benjamin Schmidt.
\newblock Lectures on {Bridgeland} stability.
\newblock In {\em Moduli of curves. CIMAT Guanajuato, Mexico 2016. Lecture notes of a CIMPA-ICTP school, Guanajuato, Mexico, February 22 -- March 4, 2016}, pages 139--211. Cham: Springer, 2017.

\bibitem[Nee21]{Neeman:2021}
Amnon Neeman.
\newblock The {{\(t\)}}-structures generated by objects.
\newblock {\em Trans. Am. Math. Soc.}, 374(11):8161--8175, 2021.

\bibitem[Nee24]{Neeman:2022}
Amnon Neeman.
\newblock Bounded {$t$}-structures on the category of perfect complexes.
\newblock {\em Acta Math.}, 233(2):239--284, 2024.

\bibitem[Nee25]{Neeman:2025}
Amnon Neeman.
\newblock Triangulated categories with a single compact generator and two brown representability theorems, v5.
\newblock \href{https://arxiv.org/abs/1804.02240}{arXiv:1804.02240}, 2025.

\bibitem[New12]{Newstead:2012}
Peter~E. Newstead.
\newblock {\em Introduction to moduli problems and orbit spaces}, volume~51 of {\em Lect. Math. Phys., Math., Tata Inst. Fundam. Res.}
\newblock New Delhi: Narosa/published for the Tata Institute of Fundamental Research, reprint of the 1978 original printed in new typeset edition, 2012.

\bibitem[Ols07]{Olsson:2007}
Martin Olsson.
\newblock Sheaves on {A}rtin stacks.
\newblock {\em J. Reine Angew. Math.}, 603:55--112, 2007.

\bibitem[Ols16]{Olsson:2016}
Martin Olsson.
\newblock {\em Algebraic spaces and stacks}, volume~62 of {\em Colloq. Publ., Am. Math. Soc.}
\newblock Providence, RI: American Mathematical Society (AMS), 2016.

\bibitem[Paq18]{Paquette:2018}
Charles Paquette.
\newblock Generators versus projective generators in abelian categories.
\newblock {\em J. Pure Appl. Algebra}, 222(12):4189--4198, 2018.

\bibitem[Pol03]{Polishchuk:2003}
Alexander Polishchuk.
\newblock {\em Abelian varieties, theta functions, and the {Fourier} transform}, volume 153 of {\em Camb. Tracts Math.}
\newblock Cambridge: Cambridge University Press, 2003.

\bibitem[Ses82]{Seshadri:2982}
C.~S. Seshadri.
\newblock {\em Fibres vectoriels sur les courbes alg{\'e}briques}, volume~96 of {\em Ast{\'e}risque}.
\newblock Soci{\'e}t{\'e} Math{\'e}matique de France (SMF), Paris, 1982.

\bibitem[{Sta}25]{stacks-project}
The {Stacks Project Authors}.
\newblock \textit{Stacks Project}.
\newblock \url{https://stacks.math.columbia.edu}, 2025.

\end{thebibliography}

\end{document}